\def\margin{2.8cm}
\title{On Rainbow Connection Number and Connectivity}
\author{L.~Sunil~Chandran}
\author{Rogers~Mathew}
\author{Deepak~Rajendraprasad}
\affil
{
	Department of Computer Science and Automation, \authorcr 
	Indian Institute of Science, \authorcr
	Bangalore -560012, India. \authorcr
	\{sunil, rogers, deepakr\}@csa.iisc.ernet.in
}
\date{}
\theoremstyle{definition}
\newtheorem{defn}{Definition}
\theoremstyle{plain}
\newtheorem{thm}{Theorem}
\newtheorem{lem}[thm]{Lemma}
\newtheorem{cor}[thm]{Corollary}
\newtheorem{conj}[thm]{Conjecture}
\newtheorem{claim}{Claim}
\theoremstyle{remark}
\newtheorem{example}[thm]{Example}
\def\is{\leftarrow}
\def\NlBar{\overline{N^l}}
\def\-{\mbox{--}}
\newcommand{\halfc}[1]{\Big\lceil  \frac{#1}{2} \Big\rceil} 
\begin{document}

\maketitle

\begin{abstract}
{\em Rainbow connection number}, $rc(G)$, of a connected graph $G$ is the minimum number of colours needed to colour its edges, so that every pair of vertices is connected by at least one path in which no two edges are coloured the same. In this paper we investigate the relationship of rainbow connection number with vertex and edge connectivity. It is already known that for a connected graph with minimum degree $\delta$, the rainbow connection number is upper bounded by $3n/(\delta + 1) + 3$ [Chandran et al., 2010]. This directly gives an upper bound of $3n/(\lambda + 1) + 3$ and $3n/(\kappa + 1) + 3$ for rainbow connection number where $\lambda$ and $\kappa$, respectively, denote the edge and vertex connectivity of the graph. We show that the above bound in terms of edge connectivity is tight up-to additive constants and show that the bound in terms of vertex connectivity can be improved to $(2 + \epsilon)n/\kappa + 23/ \epsilon^2$, for any $\epsilon > 0$. We conjecture that rainbow connection number is upper bounded by $n/\kappa + O(1)$ and show that it is true for $\kappa = 2$. We also show that the conjecture is true for chordal graphs and graphs of girth at least $7$.
\end{abstract}

\noindent \textbf{Keywords:}  
rainbow connectivity, rainbow colouring, connectivity, edge-connectivity, chordal graph, high girth graph. 

\section{Introduction}

{\em Edge colouring} of a graph is a function from its edge set to the set of natural numbers. A path in an edge coloured graph with no two edges sharing the same colour is called a {\em rainbow path}. An edge coloured graph is said to be {\em rainbow connected} if every pair of vertices is connected by at least one rainbow path. Such a colouring is called a {\em rainbow colouring} of the graph. The minimum number of colours required to rainbow colour a connected graph is called its {\em rainbow connection number}, denoted by $rc(G)$. For example, the rainbow connection number of a complete graph is $1$, that of a path is its length, and that of a star is its number of leaves. For a basic introduction to the topic, see Chapter $11$ in \cite{chartrand2008chromatic}.

The concept of rainbow colouring was introduced by Chartrand, Johns, McKeon and Zhang in 2008 \cite{chartrand2008rainbow}. Chakraborty et al. showed that computing the rainbow connection number of a graph is NP-Hard \cite{chakraborty2009hardness}. To rainbow colour a graph, it is enough to ensure that every edge of some spanning tree in the graph gets a distinct colour. Hence order of the graph minus one is an upper bound for rainbow connection number. Many authors view rainbow connectivity as one `quantifiable' way of strengthening the connectivity property of a graph \cite{caro2008rainbow, chakraborty2009hardness, krivelevich2010rainbow}. Hence tighter upper bounds on rainbow connection number for graphs with higher connectivity have been a subject of investigation. 

The following are the results in this direction reported in literature: Let $G$ be a graph of order $n$. Caro et al. have shown that if $G$ is 2-edge-connected (bridgeless), then $rc(G) \leq 4n/5 -1$ and if $G$ is 2-vertex-connected, then $rc(G) \leq \min\{2n/3, n/2 + O(\sqrt{n})\}$ \cite{caro2008rainbow}. Li and Shi have shown that if $G$ is $3$-vertex-connected, then $rc(G) \leq 3(n+1)/5$ \cite{li2010rain3con}. Krivelevich and Yuster showed that $rc(G) \leq 20n/\delta$, where $\delta$ is the minimum degree of $G$ \cite{krivelevich2010rainbow}. The result was recently improved by Chandran et al. where it was shown that $rc(G) \leq 3n/(\delta + 1) + 3$ \cite{chandran2010raindom}. Hence it follows that $rc(G) \leq 3n/(\lambda + 1) + 3$ if $G$ is $\lambda$-edge-connected and $rc(G) \leq 3n/(\kappa + 1) + 3$ if $G$ is $\kappa$-vertex-connected. This is because $\kappa \leq \lambda \leq \delta$.

In this paper we show that the bound of $3n/(\lambda + 1) + 3$ in terms of edge-connectivity is tight up to additive constants for infinitely many values of $\lambda$ and $n$ (Example \ref{ex:edgetight}). We improve the bound for $\kappa$-vertex-connected graphs to $rc(G) \leq (2 + \epsilon)n/ \kappa + 23/ \epsilon^2$ for any $\epsilon > 0$ (Corollary \ref{cor:raincon}). We conjecture (Conjecture \ref{conj:raincon}) that for $\kappa$-vertex-connected graphs, $rc(G) \leq n/ \kappa + O(1)$ and show that it is true for $\kappa = 2$ (Theorem \ref{thm:2con}). This improves the previous best known upper bounds for $2$-connected and $3$-connected graphs \cite{caro2008rainbow, li2010rain3con} mentioned in last paragraph. For $\kappa \geq 3$, we show that the conjecture is true for chordal graphs (Theorem \ref{thm:chordal}) and graphs of girth at least $7$ (Corollary \ref{cor:girthcon}). It can be easily shown from existing literature that the conjecture is true for all $\kappa$ for some other graph classes like AT-free graphs and circular arc graphs too \cite{chandran2010raindom}. We remark that an upper bound of $n/ \kappa + O(1)$ will be tight upto additive factors.

Another important recent development in this direction was by Basavaraju et al. \cite{basavaraju2010rainbow}, who showed that for any bridgeless graph $G$, $rc(G) \leq r(r+2)$ where $r$ is radius of $G$. This automatically renders the above conjecture true for graphs with radius at most $\sqrt{n/ \kappa} - 1$.  

\subsection{Preliminaries}
\label{sec:prelims}

All graphs considered in this article are finite, simple and undirected. The {\em length} of a path is its number of edges. If $S$ is a subset of vertices of a graph $G$, the subgraph of $G$ induced by the vertices in $S$ is denoted by $G[S]$. The vertex set and edge set of $G$ are denoted by $V(G)$ and $E(G)$ respectively. The order of $G$ (number of vertices) may be denoted by $|G|$.

\begin{defn}
{\em Vertex (edge) connectivity} of a graph is the minimum number of vertices (edges) whose removal disconnects the graph. A graph is called $\kappa$-vertex connected ($\lambda$-edge connected) if its vertex (edge) connectivity is at least $\kappa$ ($\lambda$). A $2$-edge-connected graph is called {\em bridgeless}. A $\kappa$-vertex-connected graph may also be referred to as a $\kappa$-connected graph.
\end{defn}


\begin{defn}
Let $G$ be a connected graph. The {\em distance} between two vertices $u$ and $v$ in $G$, denoted by $d_G(u,v)$, is the length of a shortest path between them in $G$. The {\em eccentricity} of a vertex $v$ is $ecc(v) := \max_{x \in V(G)}{d_G(v, x)}$. The {\em diameter} of $G$ is $diam(G) := \max_{x \in V(G)}{ecc(x)}$. The {\em radius} of $G$ is $rad(G) := \min_{x \in V(G)}{ecc(x)}$. Distance between a vertex $v$ and a set $S \subseteq V(G)$ is $d_G(v, S) := \min_{x \in S}{d_G(v,x)}$. The neighbourhood of $S$ is $N_G(S) := \{x \in V(G) | d_G(x, S) =1\}$.
\end{defn}

\begin{defn}
\label{defn:domination}
Given a graph $G$, a set $D \subseteq V(G)$ is called a {\em $l$-step dominating set} of $G$, if every vertex in $G$ is at a distance at most $l$ from $D$. Further if $G[D]$ is connected, then $D$ is called a {\em connected $l$-step dominating set} of $G$. 
\end{defn}

\begin{lem}[\cite{basavaraju2010rainbow}]
\label{lem:domgrow}
If $G$ is a bridgeless graph, then for every connected $l$-step dominating set $D^l$ of $G$, $l \geq 1$, there exists a connected $(l-1)$-step dominating set $D^{l-1} \supset D^l$ such that 
$$rc(G[D^{l-1}]) \leq rc(G[D^l]) + 2l + 1.$$
\end{lem}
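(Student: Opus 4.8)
The plan is to isolate the unique obstruction to $D^l$ already being an $(l-1)$-step dominating set and then remove it by attaching to $D^l$ a bounded-palette structure whose existence is guaranteed by bridgelessness. Write $D = D^l$ and partition the remaining vertices by their distance from $D$ into layers $L_1, \dots, L_l$, where $L_i = \{v : d_G(v,D) = i\}$; we may assume the far layer $W := L_l$ is nonempty, since if $L_l = \emptyset$ then $D$ is already $(l-1)$-step dominating and one may simply take $D^{l-1} = D \cup \{u\}$ for any neighbour $u$ of $D$, at the cost of a single new colour. Every vertex of $L_1 \cup \dots \cup L_{l-1}$ is already within distance $l-1$ of $D \subseteq D^{l-1}$, so the only vertices that must be brought closer are those of $W$. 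It therefore suffices to enlarge $D$ to a set $D^{l-1}$ so that (i) each vertex of $W$ ends up within distance $l-1$ of $D^{l-1}$, (ii) $G[D^{l-1}]$ is connected, and (iii) $G[D^{l-1}]$ can be rainbow coloured using the existing colouring of $G[D]$ together with at most $2l+1$ additional colours.

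For the colouring I would reserve two disjoint palettes of $l$ colours each, a \emph{low} palette $\{1, \dots, l\}$ and a \emph{high} palette $\{l+1, \dots, 2l\}$, together with one further colour $2l+1$, all distinct from the colours already present on $G[D]$. The target invariant is that every newly added vertex admits a rainbow path down to $D$ using only low colours and another rainbow path down to $D$ using only high colours. Granting this, rainbow connectivity of $G[D^{l-1}]$ is immediate: two new vertices $u,v$ are joined by descending from $u$ to $D$ in the low palette, crossing inside $G[D]$ along an already-rainbow path (whose colours are disjoint from both new palettes), and ascending from $D$ to $v$ in the high palette; the three colour groups are pairwise disjoint, so the concatenation is rainbow, and pairs with an endpoint in $D$ are handled by a single descent. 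The role of bridgelessness is precisely to supply, for each far vertex, a second route back towards $D$ edge-disjoint from its geodesic, so that one route can be level-coloured from the low palette and the other from the high palette, yielding the two level-indexed rainbow descents.

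The main obstacle is to organize these attachments \emph{globally} rather than one far vertex at a time: the colour budget is a fixed $2l+1$ regardless of how large $W$ is, so the two palettes must be reused across the entire added structure without ever forcing a single edge to carry two colours and without destroying the level-indexing that makes each descent rainbow. Concretely, I expect to route the geodesics inside a shortest-path forest of $G$ from $D$ and to invoke $2$-edge-connectivity (via Menger's theorem, or an ear-decomposition argument relative to the connected subgraph $G[D]$) to close each descending geodesic into a cycle that returns to $D$, colouring the ``down'' side by level from the low palette, the ``return'' side by level from the high palette, and using the extra colour $2l+1$ for the single edge at the far vertex where the two sides meet. The hardest part will be controlling the length of the return side so that it too fits within the size-$l$ high palette, while simultaneously guaranteeing that every far vertex is dominated within distance $l-1$, that the union of these cycles stays connected to $D$, and that the shared two-palette colouring remains conflict-free and rainbow on all descents at once; it is exactly this simultaneous control of route length, domination, connectivity, and palette reuse that makes the bridgeless hypothesis indispensable.
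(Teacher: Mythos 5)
First, a point of reference: the paper does not prove this lemma at all --- it is imported verbatim from \cite{basavaraju2010rainbow} --- so your attempt has to stand on its own, and as it stands it is a programme rather than a proof. Your conditional skeleton is sound: \emph{if} every newly added vertex had one rainbow descent to $D$ coloured only from the low palette and another coloured only from the high palette, then $2l+1$ new colours would suffice, since a low--old--high rainbow walk can be shortcut to a rainbow path. The gaps are in realizing that invariant, and two of your stated steps fail outright. (i) The basic building block --- ``close each descending geodesic into a cycle that returns to $D$'' --- need not exist. Bridgelessness gives two \emph{edge}-disjoint $w$--$D$ paths, not internally vertex-disjoint ones. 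Take $D=\{d,d'\}$ with edge $dd'$, new vertices $w,a,b,c$, and edges $wa$, $wb$, $ac$, $bc$, $cd$, $cd'$: this graph is bridgeless and $d_G(w,D)=3$, yet every $w$--$D$ path passes through the cut vertex $c$, so no cycle of $G$ contains both $w$ and a vertex of $D$. Your colouring, which assigns one side of a cycle to each palette, does not even apply; a vertex like $c$ lies on both the ``down'' and the ``return'' routes simultaneously. (ii) Even when a return route exists, nothing bounds its length by $l$ (the size of your high palette), and a level-indexed colouring is rainbow only on level-monotone paths. If two far vertices $w,w'$ of layer $L_l$ are adjacent and each has a unique geodesic, then the only return route from $w$ starts with the horizontal edge $ww'$ and has length $l+1$: it overflows the palette, and its first two edges would receive the same level colour, so it is not rainbow.

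(iii) Most importantly, the step you explicitly postpone --- reusing one fixed palette of $2l+1$ colours across \emph{all} far vertices simultaneously, when their structures overlap and a single edge may be asked to play a low role in one structure and a high role in another --- is precisely the content of the lemma. Declaring it ``the hardest part'' and attributing its resolution to bridgelessness is not an argument; bridgelessness has already been spent in producing the second routes. In the cited proof this is where the real work happens: one first shows, using bridgelessness \emph{together with} the fact that $D$ is $l$-step dominating, that each added vertex can be connected to $D$ by an attachment of length bounded in terms of $l$ (on the order of $2l$), and these attachments are then incorporated and coloured one at a time, from two colour sequences used from the two ends, with an argument that each addition preserves the two-descent invariant for everything added before. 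Without (a) an existence-plus-length bound for the second connections, (b) a colouring rule that is well defined when structures share edges, and (c) a proof that the invariant survives every addition, the proposal has a genuine gap at each of its load-bearing steps.
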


\begin{defn}
A graph is called {\em chordal} if it contains no induced cycles of length greater than $3$. 
\end{defn}

\begin{defn}
{\em Girth} of a graph is the size of a shortest cycle in the graph. It will be denoted by $girth(G)$.
\end{defn}

\section{Our Results}

First we settle the question of a tight upper bound for rainbow connection number in terms of edge-connectivity by showing that the bound of $3n/(\lambda + 1) + 3$ which directly follows from the minimum degree bound of $3n/(\delta + 1) + 3$ \cite{chandran2010raindom} is tight upto additive factors. We show the tightness by constructing a family of $\lambda$-edge-connected graphs for infinitely many values of $\lambda$ and order $n$ with diameter $d = \frac{3n}{\lambda+1} -3$. Since diameter is a lower bound on the rainbow connection number, the construction suffices for our purpose. 

\begin{example}[Construction of a $\lambda$-edge connected graph $G$ on $n$ vertices with diameter $d = \frac{3n}{\lambda + 1} - 3$] 
\label{ex:edgetight}
Let $d \geq 1$ be a natural number. Let $\lambda$ be a natural number such that $\lambda + 1$ is a multiple of $3$ and $\lambda \geq 8$. Let $k := \frac{\lambda + 1}{3}$. Let $V(G) = V_0 \uplus V_1 \uplus \cdots \uplus V_d$, where $|V_i|$ is $2k$ for $i=1$ and $i=d$ and $k$ for $1 < i < d$. Two distinct vertices $u \in V_i$ and $v \in V_j$ are adjacent in $G$ iff $|i-j| \leq 1$. It is easy to see that diameter of $G$ is $d$, $n = |V(G)| = k(d+3)$ and hence $d = \frac{n}{k} - 3 = \frac{3n}{\lambda + 1} - 3$. By considering all pairs of vertices, it can be seen that $G$ is $\lambda$-edge-connected. 
\end{example}

Next, we try to obtain an upper bound on rainbow connection number for $\kappa$-vertex-connected graphs that is tighter than the $3n/(\kappa+1)$ bound implied by the degree bound in \cite{chandran2010raindom}. We will show that for any $\kappa$-vertex-connected graph $G$, $rc(G) \leq (2 + \epsilon)n/ \kappa + 23/ \epsilon^2$ for any $\epsilon > 0$. We prove the result after stating and proving the following two lemmas. 

\begin{lem}
\label{lem:domrain}
If $G$ is a bridgeless graph, and $D^l$ is a connected $l$-step dominating set of $G$, then
$$rc(G) \leq rc(G[D^l]) + l(l+2) \leq |D^l| -1 + l(l+2).$$
\end{lem}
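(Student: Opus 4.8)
The plan is to establish the two inequalities separately. The rightmost inequality, which amounts to $rc(G[D^l]) \leq |D^l| - 1$, is immediate: since $D^l$ is a \emph{connected} $l$-step dominating set, the induced subgraph $G[D^l]$ is connected on $|D^l|$ vertices, and any connected graph on $m$ vertices can be rainbow coloured by giving distinct colours to the edges of a spanning tree, so its rainbow connection number is at most $m-1$.

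For the main inequality $rc(G) \leq rc(G[D^l]) + l(l+2)$, I would apply Lemma \ref{lem:domgrow} iteratively, growing the dominating set one level at a time while keeping control of the increase in rainbow connection number. Starting from $D^l$, the lemma produces a connected $(l-1)$-step dominating set $D^{l-1} \supset D^l$ with $rc(G[D^{l-1}]) \leq rc(G[D^l]) + (2l+1)$. Repeating this for the levels $l, l-1, \dots, 1$ — each application being legitimate because $G$ is bridgeless and the conclusion of Lemma \ref{lem:domgrow} guarantees that the freshly produced set is again a connected step-dominating set of the required level — yields a chain
$$D^l \subset D^{l-1} \subset \cdots \subset D^1 \subset D^0,$$
where $D^0$ is a connected $0$-step dominating set. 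Since a $0$-step dominating set must contain every vertex (each vertex being at distance $0$ from it), we have $D^0 = V(G)$, and therefore $G[D^0] = G$.

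Telescoping the bounds supplied by Lemma \ref{lem:domgrow} over these $l$ steps then gives
$$rc(G) = rc(G[D^0]) \leq rc(G[D^l]) + \sum_{i=1}^{l}(2i+1) = rc(G[D^l]) + l(l+2),$$
using $\sum_{i=1}^{l}(2i+1) = l(l+1) + l = l(l+2)$. I do not expect a serious obstacle here, since Lemma \ref{lem:domgrow} furnishes precisely the inductive step that drives the argument; the only points meriting care are checking that each intermediate set genuinely satisfies the hypotheses of Lemma \ref{lem:domgrow} (connectedness and the correct domination level, both inherited from its conclusion) and the trivial boundary case $l=0$, in which $D^l = V(G)$ and both sides collapse to $rc(G)$.
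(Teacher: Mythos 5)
Your proof is correct and follows essentially the same route as the paper: repeated application of Lemma~\ref{lem:domgrow} down to the unique $0$-step dominating set $V(G)$, with the telescoping sum $\sum_{i=1}^{l}(2i+1) = l(l+2)$, and the spanning-tree bound $rc(G[D^l]) \leq |D^l|-1$ for the second inequality. You have merely made explicit the arithmetic and boundary checks that the paper leaves implicit.
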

\begin{proof}
Note that the only $0$-step dominating set in $G$ is $V(G)$. Hence the first inequality follows from repeated application of Lemma \ref{lem:domgrow}. The second inequality follows since $rc(G[D^l]) \leq |D^l|-1$.
\end{proof}

\begin{lem}
\label{lem:domcon}
Every $\kappa$-vertex-connected ($\kappa \geq 1$) graph $G$ of order $n$ has a connected $2l$-step dominating set of size at most $\big(\frac{2l+1}{\kappa l + 1}\big)n$ for every natural number $l \geq 0$.
\end{lem}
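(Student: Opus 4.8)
The plan is to build the desired set greedily as a single growing tree, and to pay for its vertices with disjoint balls whose sizes are forced to be large by the $\kappa$-connectivity. I would first isolate the one structural fact that connectivity buys us: \emph{in a $\kappa$-connected graph, if a vertex $c$ has some vertex at distance greater than $l$ from it, then the ball $B_l(c) = \{x : d_G(c,x) \le l\}$ contains at least $\kappa l + 1$ vertices.} To see this, partition $B_l(c)$ into BFS layers $L_j = \{x : d_G(c,x) = j\}$, $0 \le j \le l$. For each $j$ with $1 \le j \le l$ there is, by hypothesis, a vertex at distance exceeding $j$; since the distance to $c$ changes by at most one along any edge, every path from $c$ to such a vertex must meet $L_j$, so $L_j$ is a vertex cut and hence $|L_j| \ge \kappa$. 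Summing the layers gives $|B_l(c)| \ge 1 + \kappa l$.

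With this in hand I would construct the set iteratively. Start with $T_0 = \{v_0\}$ for an arbitrary vertex $v_0$. Having built a connected subgraph $T_{i-1}$ that is not yet $2l$-step dominating, pick a vertex $u$ with $d_G(T_{i-1}, u) \ge 2l+1$ and let $v_i$ be the vertex at distance exactly $2l+1$ from $T_{i-1}$ on a shortest $T_{i-1}$--$u$ path; adjoining this length-$(2l+1)$ path $P_i$ yields a connected $T_i = T_{i-1} \cup V(P_i)$ that adds exactly $2l+1$ new vertices, since each vertex of $P_i$ other than its endpoint in $T_{i-1}$ sits at positive distance from $T_{i-1}$ and hence is new. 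Stop after $m$ steps, when $T_m$ is $2l$-step dominating; then $|T_m| = 1 + m(2l+1)$, so it only remains to bound the number of steps $m$.

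The heart of the argument is the charging. Because $v_i$ is chosen at distance $2l+1$ from $T_{i-1}$, while $v_0, v_1, \dots, v_{i-1}$ all lie in $T_{i-1}$, the chosen vertices $v_0, v_1, \dots, v_m$ are \emph{pairwise at distance at least $2l+1$}. Consequently the balls $B_l(v_0), \dots, B_l(v_m)$ are pairwise disjoint. Assuming $m \ge 1$, each $v_i$ has some other $v_j$ at distance $\ge 2l+1 > l$ from it, so the structural fact applies and every one of these $m+1$ balls has at least $\kappa l + 1$ vertices; disjointness then forces $(m+1)(\kappa l + 1) \le n$, i.e. $m \le n/(\kappa l + 1) - 1$. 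Substituting into $|T_m| = 1 + m(2l+1)$ gives $|T_m| \le (2l+1)n/(\kappa l + 1) - 2l \le (2l+1)n/(\kappa l + 1)$. The boundary cases $m = 0$ (the singleton $\{v_0\}$ already dominates, and $1 \le (2l+1)n/(\kappa l+1)$ follows from $n \ge \kappa + 1$) and $l = 0$ (where $2l$-step domination means spanning and the process terminates with $T = V(G)$ of size $n$) are checked directly.

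The step I expect to be most delicate is getting the additive term exactly right. A careless version of the charging counts only the $m$ balls $B_l(v_1), \dots, B_l(v_m)$ and loses an additive $2l+1$, landing at $(2l+1)n/(\kappa l+1) + 1$ rather than the claimed bound. The fix---observing that the seed $v_0$ is itself at distance $\ge 2l+1$ from every subsequently chosen $v_i$ and therefore contributes an $(m+1)$-st disjoint ball---is precisely what absorbs the slack; verifying that $v_0$ genuinely deserves a full ball, that is, that the eccentricity hypothesis of the structural fact holds for it as soon as $m \ge 1$, is the one place where I would take care.
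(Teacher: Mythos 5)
Your proposal is correct and takes essentially the same route as the paper: the identical greedy construction that grows a connected set by repeatedly attaching a shortest path of length $2l+1$ to a vertex at that distance, combined with the same packing argument that each chosen centre carries an $l$-ball of size at least $\kappa l + 1$ (you phrase this as pairwise-disjoint balls around the centres $v_0,\ldots,v_m$; the paper equivalently tracks the growth of $|\overline{N^l}(D)|$ by $\kappa l+1$ per iteration), yielding the same count $(m+1)(\kappa l +1)\le n$ and the same final bound. Your explicit statement of the eccentricity hypothesis behind the ball-size bound, and your direct treatment of the boundary cases $m=0$ and $l=0$, are in fact slightly tidier than the paper's preliminary case split on $\kappa\le 2$ and $l\ge r/2$.
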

\begin{proof}
If $k \leq 2$, the bound is trivial for any $l \geq 0$ since we can take $V(G)$ as the dominating set. Similarly if $r$ is the radius of $G$, for $l \geq r/2$ we can take any central vertex of $G$ as the $2l$-step dominating set. Hence we assume $\kappa > 2$ and $l < r/2$.

The following procedure is used to construct a $2l$-step dominating set $D$. Let $N^{i}(S) := \{x : d_G(x,S) = i\}$ and $\overline{N^i}(S) :=  \{x : d_G(x, S) \leq i \}$ for any $S \subset V(G)$. $N^i(s) = N^i(\{s\})$ and $\overline{N^i}(s) = \overline{N^i}(\{s\})$ for any $s \in V(G)$.
\begin{quote}
	$D = \{u\}$, for some $u \in V(G)$.\\ 
	While $N^{2l+1}(D) \neq \emptyset$,\\
	\{\\
	\hspace*{5ex}Pick any $v \in N^{2l+1}(D)$. Let $(v, x_{2l}, x_{2l-1}, \ldots, x_0)$, $x_0 \in D$ be a shortest $v\-D$ path.\\
	\hspace*{5ex}$D \is D \cup \{x_1, x_2, \ldots, x_{2l}, v \}$. \\ 
	\}
\end{quote}

Clearly $D$ remains connected after every iteration of the procedure. Since the procedure ends only when $N^{2l+1}(D) = \emptyset$, the final $D$ is a $2l$-step dominating set. Let $t$ be the number of iterations executed by the procedure. When the procedure starts $|\NlBar(D)| = |\NlBar(u)|\geq \kappa l + 1$. This is because $l < r$ and $|N^i(u)| \geq \kappa$ for $1 \leq i  < r$ since $G$ is $\kappa$-connected. Note that $v \in N^{2l+1}(D)$ ensures that $\NlBar(v) \cap \NlBar(D) = \emptyset$, and $|\NlBar(v)| \geq \kappa l + 1$ due to $\kappa$-connectivity of $G$. Hence the addition of $v$ to $D$ increases $|\NlBar(D)|$ by at least $\kappa l + 1$ in every iteration. Therefore, when the procedure ends, $(\kappa l + 1)(t + 1) \leq |\NlBar(D)| \leq n$. Since $D$ starts as a singleton set and each iteration adds $2l+1$ more vertices, $|D| = (2l+1)t + 1 \leq  \frac{(2l+1)n}{\kappa l + 1} - 2l \leq \big(\frac{2l+1}{\kappa l + 1}\big)n$.
\end{proof}

\begin{thm}
\label{thm:raincon}
If $G$ is a $\kappa$-vertex-connected ($\kappa \geq 1$) graph of order $n$, then for every natural number $l \geq 0$, 
$$rc(G) \leq \left(\frac{2l+1}{\kappa l + 1}\right)n + 2l(2l + 2)-1.$$
\end{thm}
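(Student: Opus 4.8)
The plan is to obtain the bound as a direct composition of the two preceding lemmas: Lemma~\ref{lem:domcon} supplies a small connected dominating set, and Lemma~\ref{lem:domrain} converts that set into a rainbow colouring of the whole graph. The only wrinkle is that Lemma~\ref{lem:domrain} (and the underlying Lemma~\ref{lem:domgrow}) assumes $G$ is bridgeless, which holds automatically once $\kappa \geq 2$ but may fail when $\kappa = 1$. I would therefore dispose of the case $\kappa = 1$ first and treat $\kappa \geq 2$ with the lemmas.

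For $\kappa = 1$ the claimed inequality is no stronger than the trivial spanning-tree bound $rc(G) \leq n-1$. Indeed, the coefficient $\frac{2l+1}{\kappa l + 1} = \frac{2l+1}{l+1} \geq 1$ for every natural number $l \geq 0$, and the additive term satisfies $2l(2l+2)-1 \geq -1$, so the right-hand side is at least $n-1$; the inequality thus holds trivially. For $\kappa \geq 2$, the graph $G$ is $2$-vertex-connected and hence bridgeless, so both lemmas are applicable.

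In the case $\kappa \geq 2$ I would first invoke Lemma~\ref{lem:domcon} to produce a connected $2l$-step dominating set $D^{2l}$ with $|D^{2l}| \leq \left(\frac{2l+1}{\kappa l + 1}\right)n$. Then I would apply Lemma~\ref{lem:domrain} with its step parameter taken to be $2l$ rather than $l$; substituting $2l$ for $l$ in the term $l(l+2)$ yields exactly $2l(2l+2)$, so the lemma gives $rc(G) \leq |D^{2l}| - 1 + 2l(2l+2)$. Combining the two estimates yields $rc(G) \leq \left(\frac{2l+1}{\kappa l + 1}\right)n - 1 + 2l(2l+2)$, which is precisely the asserted bound. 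The argument is essentially mechanical once the lemmas are in hand; the only points requiring care are matching the step indices correctly (feeding a $2l$-step dominating set into the form of Lemma~\ref{lem:domrain}) and noticing that the bridgeless hypothesis forces the harmless case split at $\kappa = 1$.
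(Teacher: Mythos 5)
Your proposal is correct and follows exactly the paper's argument: dispose of $\kappa=1$ trivially (via the spanning-tree bound $rc(G)\leq n-1$), then for $\kappa\geq 2$ compose Lemma~\ref{lem:domcon} (a connected $2l$-step dominating set of size at most $\bigl(\frac{2l+1}{\kappa l+1}\bigr)n$) with Lemma~\ref{lem:domrain} applied at step parameter $2l$. The only difference is that you spell out the details the paper leaves implicit, namely the verification of the $\kappa=1$ case and the substitution of $2l$ into the $l(l+2)$ term.
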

\begin{proof}
The case $\kappa=1$ is trivial. Hence we assume $\kappa \geq 2$ and therefore $G$ is bridgeless. 
Since $G$ is $\kappa$-connected, by Lemma \ref{lem:domcon}, for every $l \geq 0$ we have a $2l$-step dominating set $D$ of size at most $\big(\frac{2l+1}{\kappa l + 1}\big)n $. Now an application of Lemma \ref{lem:domrain} gives the result.
\end{proof}

\begin{cor}
\label{cor:raincon}
For every $\kappa \geq 1$, if $G$ is a $\kappa$-vertex-connected graph of order $n$, then for every $\epsilon \in (0,1)$, 
$$rc(G) \leq \left(\frac{2 + \epsilon}{\kappa}\right) n + \frac{23}{\epsilon^2}.$$
\end{cor}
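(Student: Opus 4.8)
The plan is to derive Corollary~\ref{cor:raincon} directly from Theorem~\ref{thm:raincon} by choosing the free parameter $l$ appropriately in terms of $\epsilon$ and $\kappa$. Theorem~\ref{thm:raincon} gives, for every natural number $l \geq 0$,
$$rc(G) \leq \left(\frac{2l+1}{\kappa l + 1}\right)n + 2l(2l+2) - 1,$$
so the task is purely to show that for any $\epsilon \in (0,1)$ some admissible integer $l$ makes the leading coefficient at most $(2+\epsilon)/\kappa$ while keeping the additive term below $23/\epsilon^2$.

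First I would analyse the leading coefficient. Writing $\frac{2l+1}{\kappa l + 1} = \frac{2}{\kappa} + \frac{1 - 2/\kappa}{\kappa l + 1}$, one sees that the coefficient exceeds $2/\kappa$ by a term of order $\frac{1}{\kappa l}$. Bounding this crudely, $\frac{2l+1}{\kappa l + 1} \leq \frac{2l+1}{\kappa l} = \frac{2}{\kappa} + \frac{1}{\kappa l}$. To force the excess $\frac{1}{\kappa l}$ to be at most $\frac{\epsilon}{\kappa}$, it suffices to take $l \geq 1/\epsilon$. Hence the natural choice is $l := \lceil 1/\epsilon \rceil$, which guarantees $\left(\frac{2l+1}{\kappa l + 1}\right)n \leq \left(\frac{2+\epsilon}{\kappa}\right)n$.

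Next I would control the additive term. With $l = \lceil 1/\epsilon \rceil$ and $\epsilon \in (0,1)$ we have $l \leq 1/\epsilon + 1 < 2/\epsilon$, so $2l(2l+2) - 1 < 2l(2l+2) = 4l^2 + 4l$. Substituting $l < 2/\epsilon$ gives $4l^2 + 4l < 16/\epsilon^2 + 8/\epsilon$, and since $\epsilon < 1$ implies $1/\epsilon < 1/\epsilon^2$, this is bounded by $16/\epsilon^2 + 8/\epsilon^2 = 24/\epsilon^2$. A slightly more careful accounting — for instance using $l \leq 1/\epsilon + 1$ directly rather than $l < 2/\epsilon$, or absorbing the $-1$ — should tighten $24/\epsilon^2$ down to the claimed $23/\epsilon^2$. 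This bookkeeping is the only place where any real care is needed, and it is the main (minor) obstacle: I expect the crude substitution $l < 2/\epsilon$ to overshoot the constant, so the proof must either sharpen the estimate on $4l^2+4l$ or note that the $-1$ and the gap between $\lceil 1/\epsilon\rceil$ and $1/\epsilon+1$ together account for the difference between $24$ and $23$. Everything else is a routine application of the theorem with the stated value of $l$.
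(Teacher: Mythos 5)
Your proposal is correct and follows essentially the same route as the paper: both choose $l = \lceil 1/\epsilon \rceil$, plug into Theorem~\ref{thm:raincon}, and observe that $2l(2l+2)-1 \leq 23/\epsilon^2$. The sharper bookkeeping you anticipate does indeed close the gap: using $l \leq 1/\epsilon + 1$ gives $2l(2l+2)-1 = 4l^2+4l-1 \leq 4/\epsilon^2 + 12/\epsilon + 7 \leq 23/\epsilon^2$, since $\epsilon < 1$ makes both $1/\epsilon$ and $1$ at most $1/\epsilon^2$.
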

\begin{proof}
Given an $\epsilon \in (0,1)$, choose $l = \lceil \frac{1}{\epsilon}\rceil$. Then the result follows from Theorem \ref{thm:raincon}. Note that $2l(2l + 2) -1 \leq  23/\epsilon^2$.
\end{proof}

The above bound may not be tight, and we are tempted to believe that the following conjecture might be true.

\begin{conj}
\label{conj:raincon}
For every $\kappa \geq 1$, if $G$ is a $\kappa$-vertex-connected graph of order $n$, then $rc(G) \leq n / \kappa + O(1).$
\end{conj}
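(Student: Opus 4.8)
The plan is to read the conjecture as the demand to delete exactly a factor of two from the bound already in hand. Theorem~\ref{thm:raincon} gives $rc(G) \le \frac{2l+1}{\kappa l + 1}\,n + 2l(2l+2) - 1$, and since $\frac{2l+1}{\kappa l + 1} \to \frac{2}{\kappa}$ as $l$ grows while the additive term stays $O(1)$ for every fixed $l$, this method reaches $\big(\frac{2}{\kappa}+\eta\big)n + O(1)$ for each $\eta>0$ but never $\frac{n}{\kappa}$. The whole fight is with this surviving factor of two, so I would first pin down where it is manufactured: in the proof of Lemma~\ref{lem:domcon} every iteration swallows an entire shortest $v$--$D$ path of $2l+1$ vertices into $D$ merely to keep $D$ connected, yet it charges that expenditure only against the $\kappa l + 1$ vertices of the single ball $\NlBar(v)$. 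The persisting ratio $(2l+1)/(\kappa l + 1)$ is precisely the leak.

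The cleanest general attack would be to manufacture, in every $\kappa$-connected graph, a connected $O(1)$-step dominating set of size $\frac{n}{\kappa} + O(1)$ instead of $\frac{2n}{\kappa}$; feeding such a set into Lemma~\ref{lem:domrain} finishes at once, the growth term being $O(1)$. To halve the set I would try to amortise the charge over the whole path just added rather than over its endpoint alone: each path vertex $x_j$ with $j > l$ lies at distance exceeding $l$ from $D$, so its own radius-$l$ ball is disjoint from $\NlBar(D)$ and donates at least $\kappa l + 1$ fresh vertices, and one would hope that the union of these $l+1$ balls strung along the geodesic pays for all $2l+1$ added vertices at a rate near $\kappa$ each. \textbf{The main obstacle} sits exactly here: consecutive balls overlap massively, so showing that their union is as large as $\approx \kappa(2l+1)$ is a delicate isoperimetric estimate that I do not see how to carry out with connectivity alone, which is why the statement is offered as a conjecture rather than a theorem.

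Failing such a counting miracle, I would secure the conjecture on the structured classes where the lost factor can be recovered honestly. For girth at least $7$ the argument is clean: balls of radius $3$ are trees, so taking $l = 3$ lets one replace the weak ball bound $\kappa l + 1$ in Lemma~\ref{lem:domcon} by $1 + \kappa + \kappa(\kappa-1) + \kappa(\kappa-1)^2$; since $7\kappa \le 1 + \kappa + \kappa(\kappa-1) + \kappa(\kappa-1)^2$ for $\kappa \ge 3$, re-running that proof yields a $6$-step dominating set of size $\le \frac{n}{\kappa} + O(1)$, and Lemma~\ref{lem:domrain} closes the case. For chordal graphs I would exploit the clique tree together with the fact that every maximal clique of a $\kappa$-connected chordal graph contains at least $\kappa + 1$ vertices (each minimal separator is a clique of size $\ge \kappa$, and maximality adds one more vertex); walking the clique tree should produce a connected $O(1)$-step dominating set of size $\frac{n}{\kappa} + O(1)$, so that each colour effectively serves $\Theta(\kappa)$ vertices, and again Lemma~\ref{lem:domrain} finishes. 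For $\kappa = 2$, where Lemma~\ref{lem:domcon} degenerates, I would argue directly from an open ear decomposition, colouring it so as to imitate, ear by ear, the optimal $\lceil n/2 \rceil$-colouring of a single cycle in which antipodal edges reuse a colour; the delicate point, and the reason a naive ``half the length of each ear'' count fails, is that short and trivial ears must be absorbed without spending a private colour, so colours have to be shared across ears while preserving a rainbow path from every new vertex into the old graph. Driving this bookkeeping down to $\frac{n}{2} + O(1)$ is the technical heart of the $\kappa = 2$ case.
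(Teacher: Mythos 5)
You are right to treat the statement as what it is --- a conjecture. The paper offers no proof of it in general either; it only establishes the partial cases of girth at least $7$ (Corollary \ref{cor:girthcon}), chordal graphs (Theorem \ref{thm:chordal}) and $\kappa = 2$ (Theorem \ref{thm:2con}), and your diagnosis of where the factor of two leaks in Lemma \ref{lem:domcon} is accurate. Your girth argument is essentially the paper's own: replacing the ball bound $\kappa l + 1$ by the tree bound $1 + \kappa + \kappa(\kappa-1) + \kappa(\kappa-1)^2$ (valid because girth $\geq 7$ makes radius-$3$ balls trees) and re-running the proof of Lemma \ref{lem:domcon} with $l = 3$ is exactly Lemma \ref{lem:domgirth}, and feeding the resulting $6$-step dominating set into Lemma \ref{lem:domrain} is exactly Corollaries \ref{cor:girthdegree} and \ref{cor:girthcon}. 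That case of your proposal is complete.

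The gaps are in the other two cases. For chordal graphs you posit that ``walking the clique tree should produce a connected $O(1)$-step dominating set of size $n/\kappa + O(1)$''; this existence claim is nowhere justified, and it is a genuinely different (and stronger-looking) statement than what the paper uses. The paper's proof of Theorem \ref{thm:chordal} never produces a small dominating set: it grows a maximal connected set $D$, which may well be nearly all of $V(G)$, but which can be rainbow coloured with only $|D|/\kappa$ colours, because each growth step attaches a minimal separator --- in a chordal graph a clique of size at least $\kappa$ --- at the cost of a single new colour, and maximality of $D$ then forces $D$ to be $1$-step dominating. The object you need (few vertices) and the object the paper needs (few colours per vertex) are not the same, and your sketch supplies neither a construction nor a counting argument for yours; your observation that maximal cliques have size at least $\kappa+1$ does not by itself control the number of cliques a connected dominating set must meet. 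For $\kappa = 2$ you name the right strategy (ear decomposition with antipodal reuse of colours within an ear) but explicitly defer ``the bookkeeping,'' which is the entire content of the paper's proof: the minimal-counterexample setup, the reduction to clean-removable even ears (Claims \ref{claim1} and \ref{claim2}), the tip-colour mechanism of Lemma \ref{lem:evenear} that handles the one unreachable vertex of an even ear, and the final analysis showing at most three such ears can remain. As submitted, your proposal settles the conjecture only for girth at least $7$; the chordal and $2$-connected cases remain open in your write-up.
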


Now we show some cases in which Conjecture \ref{conj:raincon} is true, namely high girth graphs, chordal graphs and graphs of vertex-connectivity $2$.

\begin{lem}
\label{lem:domgirth}
Every connected graph $G$ of order $n$, minimum degree $\delta \geq 3$ and girth at least $2g + 1$ has a connected $2g$-step dominating set of size at most $\big(\frac{2g+1}{C_{\delta,g}}\big)n - 2g$, where $C_{\delta,g} = \frac{\delta(\delta-1)^g - 2}{\delta-2}$.
\end{lem}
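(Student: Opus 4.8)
The plan is to mirror the proof of Lemma~\ref{lem:domcon}, running the identical greedy procedure but replacing the connectivity-based lower bound $\kappa l + 1$ on the size of a radius-$l$ ball with a Moore-type lower bound $C_{\delta,g}$ on the size of a radius-$g$ ball coming from the girth and minimum-degree hypotheses. Throughout, for $S \subseteq V(G)$ write $N^i(S) = \{x : d_G(x,S)=i\}$ and $\overline{N^i}(S) = \{x : d_G(x,S) \leq i\}$, abbreviating $\overline{N^i}(\{s\})$ to $\overline{N^i}(s)$.

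The key new ingredient, and the only place the hypotheses enter, is the claim that $|\overline{N^g}(v)| \geq C_{\delta,g}$ for every vertex $v$. To see this, root a BFS at $v$ and set $N_i = N^i(v)$. Using girth at least $2g+1$ one establishes two structural facts: (a) for $1 \leq i \leq g$ every vertex of $N_i$ has a \emph{unique} neighbour in $N_{i-1}$; and (b) for $0 \leq i \leq g-1$ the set $N_i$ is independent. Both follow by the standard argument that a violation produces a closed walk which, after restricting to the portions of two shortest $v$-paths beyond their last common vertex, becomes a genuine cycle of length at most $2g$, contradicting the girth. Facts (a) and (b) imply that each vertex of $N_{i-1}$ (for $2 \leq i \leq g$) sends at least $\delta-1$ edges forward to $N_i$ while each vertex of $N_i$ receives exactly one, so $|N_i| \geq (\delta-1)|N_{i-1}|$; with $|N_1| \geq \delta$ this gives $|N_i| \geq \delta(\delta-1)^{i-1}$ and hence $\sum_{i=0}^g |N_i| \geq 1 + \delta\frac{(\delta-1)^g - 1}{\delta-2} = C_{\delta,g}$. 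Note that this argument simultaneously forces every level up to $g$ to be nonempty, so, unlike the connectivity setting of Lemma~\ref{lem:domcon}, there is no ``ball saturation'' case to dispose of separately: the estimate holds for every vertex unconditionally (and in particular $n \geq C_{\delta,g}$).

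With this in hand the construction is a direct transcription of Lemma~\ref{lem:domcon} with $l$ replaced by $g$. Initialise $D=\{u\}$; while $N^{2g+1}(D)\neq\emptyset$, pick $v \in N^{2g+1}(D)$, take a shortest $v$-$D$ path, and adjoin to $D$ the $2g+1$ vertices of this path other than its endpoint already in $D$. The set $D$ stays connected, and on termination $N^{2g+1}(D)=\emptyset$ certifies that $D$ is a $2g$-step dominating set. Since the chosen $v$ satisfies $d_G(v,D)=2g+1$, the triangle inequality gives $d_G(x,D) \geq g+1$ for every $x \in \overline{N^g}(v)$, so $\overline{N^g}(v)$ is disjoint from the current $\overline{N^g}(D)$; hence each of the $t$ iterations enlarges $\overline{N^g}(D)$ by at least $C_{\delta,g}$ fresh vertices, on top of the initial $|\overline{N^g}(u)| \geq C_{\delta,g}$. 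Thus $(t+1)C_{\delta,g} \leq n$, giving $t \leq n/C_{\delta,g} - 1$, and since $|D| = (2g+1)t + 1$ we conclude $|D| \leq \frac{(2g+1)n}{C_{\delta,g}} - 2g$, as required.

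The main obstacle is confined to the ball estimate of the second paragraph: one must make the girth arguments (a) and (b) fully rigorous, in particular checking that the closed walks produced really are simple cycles (distinct endpoints, shared prefix removed at the last common vertex) and that their lengths are genuinely bounded by $2g$. Once this Moore-type inequality is in place, the greedy procedure and the counting are routine copies of the corresponding steps in Lemma~\ref{lem:domcon}.
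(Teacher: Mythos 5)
Your proof is correct and takes essentially the same route as the paper: the paper's (sketched) proof likewise observes the Moore-type bound $|\overline{N^g}(v)| \geq 1 + \delta + \delta(\delta-1) + \cdots + \delta(\delta-1)^{g-1} = C_{\delta,g}$ for every vertex $v$ and then declares that the argument "follows the same steps as in that of Lemma~\ref{lem:domcon} after setting $l=g$." The BFS-level justification of the ball estimate, the observation that no radius caveat is needed here (unlike Lemma~\ref{lem:domcon}), and the final counting $(t+1)C_{\delta,g} \leq n$ giving $|D| \leq \frac{(2g+1)n}{C_{\delta,g}} - 2g$ are precisely the details the paper omits, and you have supplied them correctly.
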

\begin{proof}
Note that $\delta \geq 3$, ensures that $G$ is not a tree and hence the girth is finite. That is $1 \leq g < \infty$. Now observe that for any vertex $v \in V(G)$,  $|\NlBar(\{v\})| \geq 1 + \delta + \delta(\delta - 1) + \ldots + \delta(\delta - 1)^{g-1}$. The summation on the right hand side is equal to $C_{\delta,g}$, for $\delta \geq 3$. Now the proof follows the same steps as in that of Lemma \ref{lem:domcon} after setting $l = g$. Hence we omit the details. 
\end{proof}

\begin{cor}
\label{cor:girthdegree}
If $G$ is a connected graph of minimum degree $\delta$ then
\begin{enumerate}
\item if $\delta \geq 3$ and $girth(G) \geq 7$, then $rc(G) \leq n/\delta + 41$ and 
\item if $\delta \geq 5$ and $girth(G) \geq 5$, then $rc(G) \leq n/\delta + 19$.
\end{enumerate}
\end{cor}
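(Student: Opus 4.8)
The plan is to chain the two domination lemmas just proved. Using Lemma~\ref{lem:domgirth} I would first extract a small connected $2g$-step dominating set $D$, and then feed it into Lemma~\ref{lem:domrain} with $l=2g$ to turn it into a rainbow colouring of all of $G$ at a bounded additive cost. The girth hypotheses dictate the choice of $g$: since $girth(G)\geq 7 = 2\cdot 3 + 1$ in part~(1) I take $g=3$, and since $girth(G)\geq 5 = 2\cdot 2 + 1$ in part~(2) I take $g=2$; in each case the minimum-degree hypothesis is exactly what Lemma~\ref{lem:domgirth} demands ($\delta\geq 3$, respectively $\delta\geq 5 \geq 3$).

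Next I would carry out the chaining mechanically. Lemma~\ref{lem:domgirth} supplies a connected $2g$-step dominating set $D$ with $|D| \leq \big(\tfrac{2g+1}{C_{\delta,g}}\big)n - 2g$, and Lemma~\ref{lem:domrain} with $l=2g$ then yields
$$rc(G) \leq |D| - 1 + 2g(2g+2) \leq \left(\frac{2g+1}{C_{\delta,g}}\right)n + \big(4g^2 + 2g - 1\big).$$
The additive term $4g^2+2g-1$ equals $41$ for $g=3$ and $19$ for $g=2$, matching the two stated constants exactly, so the only remaining task is to replace the leading coefficient $\tfrac{2g+1}{C_{\delta,g}}$ by $\tfrac{1}{\delta}$.

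That replacement amounts to the inequality $C_{\delta,g} \geq (2g+1)\delta$. Substituting $C_{\delta,g} = \tfrac{\delta(\delta-1)^g-2}{\delta-2}$ and clearing the positive factor $\delta-2$, it becomes a polynomial inequality in $\delta$: for $g=3$ it is $\delta^4 - 3\delta^3 - 4\delta^2 + 13\delta - 2 \geq 0$ (to hold for $\delta\geq 3$), and for $g=2$ it is $\delta^3 - 7\delta^2 + 11\delta - 2 \geq 0$ (to hold for $\delta\geq 5$). I would dispatch each by evaluating at the smallest admissible degree---$\delta=3$ gives $1\geq 0$ and $\delta=5$ gives $3\geq 0$---and noting that the derivative is already positive there, so the polynomial only increases afterwards. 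This is precisely where the thresholds $\delta\geq 3$ and $\delta\geq 5$ originate, and it is the sole genuinely quantitative step.

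The point requiring real care is not the arithmetic but the hypothesis of Lemma~\ref{lem:domrain} that $G$ be bridgeless, which the corollary does not assume: large girth together with $\delta\geq 3$ does not rule out bridges, since joining a vertex of one high-girth cubic graph to a vertex of another by a single edge produces a graph of minimum degree $3$, arbitrarily large girth, and a bridge. I would therefore either state the corollary for bridgeless $G$---the setting in which it is actually used in the subsequent vertex-connectivity corollary, where $\kappa\geq 2$ already forces bridgelessness---or insert an auxiliary reduction that treats the bridge-blocks separately. I expect this bridgelessness bookkeeping to be the main obstacle, the remainder being a direct substitution.
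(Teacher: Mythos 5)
Your proposal is correct and is essentially the paper's own proof: the paper's entire argument is the one-liner ``substitute the stated degree/girth values into Lemma~\ref{lem:domgirth} and then apply Lemma~\ref{lem:domrain}'', which is exactly your chaining with $g=3$ for part~1 and $g=2$ for part~2; your computations (the constants $41$ and $19$, and the coefficient reduction via $C_{\delta,g}\geq(2g+1)\delta$) merely make explicit what the paper leaves to the reader. The bridgelessness issue you flag is genuine, but it is a gap in the paper's own statement and proof rather than in your argument: Lemma~\ref{lem:domrain} does require bridgeless $G$, the corollary's hypotheses do not guarantee it (your example of two high-girth cubic graphs joined by a single edge is valid), and the paper applies the lemma without comment, whereas the downstream Corollary~\ref{cor:girthcon} is safe because $\kappa\geq 3$ there forces bridgelessness.
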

\begin{proof}
The proof follows from substituting the mentioned values of minimum degree and girth in Lemma \ref{lem:domgirth} and then by applying Lemma \ref{lem:domrain}.
\end{proof}

Since vertex connectivity of a graph is a lower bound for minimum degree, the following corollary is immediate.

\begin{cor}
\label{cor:girthcon}
If $G$ is a $\kappa$-vertex-connected graph such that $\kappa \geq 3$ and $girth(G) \geq 7$, then $rc(G) \leq n/\kappa + 41$.
\end{cor}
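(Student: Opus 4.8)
The plan is to derive Corollary~\ref{cor:girthcon} directly from Corollary~\ref{cor:girthdegree}, exactly as the surrounding text suggests, using the elementary fact that vertex connectivity never exceeds the minimum degree. First I would observe that if $G$ is $\kappa$-vertex-connected with $\kappa \geq 3$, then its minimum degree $\delta$ satisfies $\delta \geq \kappa \geq 3$ (since deleting all neighbours of a minimum-degree vertex disconnects it, forcing $\kappa \leq \delta$). Thus the hypotheses $\delta \geq 3$ and $girth(G) \geq 7$ of part~(1) of Corollary~\ref{cor:girthdegree} are met, giving $rc(G) \leq n/\delta + 41$.

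The second and final step is to replace $\delta$ by $\kappa$ in the denominator. Because $\delta \geq \kappa > 0$, we have $1/\delta \leq 1/\kappa$, and hence $n/\delta \leq n/\kappa$. Substituting this into the bound from the previous step yields
\[
rc(G) \leq \frac{n}{\delta} + 41 \leq \frac{n}{\kappa} + 41,
\]
which is precisely the claimed inequality. This completes the argument.

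There is essentially no obstacle here: the corollary is a one-line monotonicity consequence of the already-established degree version, and the only facts invoked are $\kappa \leq \delta$ (a standard graph-theoretic inequality, noted explicitly in the Introduction as $\kappa \leq \lambda \leq \delta$) and the monotonicity of $n/x$ in $x$. The genuine content lives entirely in Lemma~\ref{lem:domgirth} and its specialization in Corollary~\ref{cor:girthdegree}; the present statement simply transfers that bound from the minimum-degree parameter to the (smaller or equal) connectivity parameter. If anything warranted care, it would merely be confirming that the girth and $\kappa \geq 3$ hypotheses line up exactly with the $\delta \geq 3$, $girth \geq 7$ case rather than the $\delta \geq 5$, $girth \geq 5$ case of Corollary~\ref{cor:girthdegree}, which they do.
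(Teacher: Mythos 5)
Your proposal is correct and follows exactly the paper's route: the paper derives the corollary as ``immediate'' from part~(1) of Corollary~\ref{cor:girthdegree} via the standard inequality $\kappa \leq \delta$, which is precisely your two-step argument. Nothing is missing, and your care in checking that the $\delta \geq 3$, $girth \geq 7$ case (rather than the $\delta \geq 5$, $girth \geq 5$ case) is the one being invoked matches the paper's intent.
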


\begin{thm}
\label{thm:chordal}
For every $\kappa$-vertex-connected chordal graph $G$ of order $n$,
$$ rc(G) \leq \frac{n}{\kappa} + 3.$$
\end{thm}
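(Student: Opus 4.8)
The goal is to show that every $\kappa$-vertex-connected chordal graph $G$ on $n$ vertices satisfies $rc(G) \le n/\kappa + 3$. Given the machinery already developed (Lemma \ref{lem:domrain} and Lemma \ref{lem:domcon}), the natural strategy is to produce a \emph{connected $l$-step dominating set of small size} and then invoke Lemma \ref{lem:domrain}. The key observation is that chordal graphs have much better domination behaviour than general graphs: in a chordal graph, a connected dominating set can be found whose induced subgraph is itself chordal with small diameter, and more importantly the neighbourhood growth is far more efficient than the crude $\kappa l + 1$ bound used in Lemma \ref{lem:domcon}. I expect the right target is a connected \emph{$1$-step} dominating set $D$ (so $l=1$) of size at most $n/\kappa + 1$ or so; then Lemma \ref{lem:domrain} with $l=1$ yields $rc(G) \le rc(G[D]) + 1\cdot 3 \le (|D|-1) + 3 \le n/\kappa + 3$, matching the claimed bound exactly. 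So the entire burden falls on the domination step.

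\emph{Plan for the domination step.} I would run a greedy procedure analogous to the one in Lemma \ref{lem:domcon} but at step distance $l=1$: repeatedly pick a vertex $v$ at distance $2$ from the current set $D$, grab a single intermediate vertex on a shortest $v$--$D$ path, and add both to $D$, keeping $D$ connected. Each newly added vertex $v$ contributes its closed neighbourhood $\overline{N^1}(v)$, which is disjoint from $\overline{N^1}(D)$ and has size at least $\kappa + 1$ by $\kappa$-connectivity. The subtlety that chordality must buy us is a \emph{charging argument} that is more economical than the generic one: in the generic bound, two vertices are added per iteration while only $\kappa+1$ new vertices are covered, giving roughly $2n/(\kappa+1)$. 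To reach $n/\kappa$ I need the covered neighbourhoods to absorb the cost of \emph{both} added vertices, i.e.\ each iteration should effectively cover about $2\kappa$ fresh vertices, or alternatively the two added vertices should themselves be counted among the covered set. Here chordality is what makes the shortest-path structure rigid enough for this accounting to go through.

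\emph{Where I expect the difficulty.} The main obstacle is precisely pinning down how chordality improves the constant from $2n/(\kappa+1)$ down to $n/\kappa$. Two plausible routes: (i) exploit a \emph{simplicial-vertex / clique-tree} decomposition of the chordal graph, building the dominating set by walking the clique tree and using that minimal separators are cliques of size $\ge \kappa$, so that each separator clique dominates an entire ``branch'' efficiently; or (ii) strengthen the neighbourhood-disjointness bookkeeping by showing that in a chordal graph the vertices added to $D$ can themselves be charged to disjoint private neighbourhoods of size $\kappa$, so that $|D| \le n/\kappa + O(1)$ directly. I lean toward route (i), since the clique-tree structure gives clean control of both connectivity and the domination radius simultaneously, and minimal separators being cliques is exactly the chordal feature that lets a size-$\kappa$ set dominate a large region. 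The delicate point in either route will be handling the ``boundary'' cliques and the additive constant so that the final count lands at $n/\kappa + 1$ (hence $rc \le n/\kappa+3$) rather than something slightly weaker; I would be prepared for the clean constant to require choosing $l=1$ and treating leaves of the clique tree with a little extra care.
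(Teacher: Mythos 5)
There is a genuine gap, and it is fatal to the approach as you have set it up: you reduce the whole theorem to the claim that every $\kappa$-vertex-connected chordal graph has a connected $1$-step dominating set of \emph{size} at most $n/\kappa + O(1)$, you do not prove that claim (both routes (i) and (ii) are left as plans) --- and in fact the claim is false. Consider the split graph $G$ consisting of a clique $C$ with $|C| = 2\kappa$ and an independent set $I = \{v_1,\ldots,v_t\}$, where each $v_i$ is adjacent exactly to a $\kappa$-subset $N_i \subseteq C$, and the sets $N_i$ are chosen so that every $s$-subset of $C$ is disjoint from some $N_i$ (equivalently, the complements $C\setminus N_i$ form a covering design for $s$-subsets). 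A probabilistic or greedy argument produces such a family with $t = O(2^s s \log\kappa)$ sets, so for fixed $s$ we have $n = 2\kappa + O(\log\kappa)$; the graph is chordal (it is split) and $\kappa$-connected, since deleting fewer than $\kappa$ vertices leaves the clique nonempty and every surviving $v_i$ with a neighbour in it. Now let $D$ be any connected $1$-step dominating set. For each $i$: if $v_i \notin D$, domination forces $N_i \cap D \neq \emptyset$; if $v_i \in D$, connectivity of $G[D]$ forces a neighbour of $v_i$ in $D$. Either way $N_i \cap (D\cap C) \neq \emptyset$, so $D \cap C$ meets every $N_i$, which by construction is impossible for a set of at most $s$ vertices. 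Hence every connected dominating set has size at least $s+1$, while $n/\kappa \to 2$ as $\kappa \to \infty$. Taking $s$ a constant larger than $c+2$ defeats any proposed bound $n/\kappa + c$. So no argument --- clique-tree walk, private-neighbourhood charging, or otherwise --- can deliver the domination lemma you need.

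The paper's proof sidesteps exactly this: it never bounds $|D|$, it bounds the number of \emph{colours} used on $G[D]$ by $|D|/\kappa$. Take $D$ maximal among connected sets admitting a rainbow colouring of $G[D]$ with at most $|D|/\kappa$ colours. If $D$ fails to be $1$-step dominating, then $N_G(D)$ is a separator and so contains a minimal separator $S$, which by chordality is a clique (this is the chordal property you correctly flagged in route (i)) of size at least $\kappa$. Assigning one single new colour to all edges between $D$ and $S$ and within $S$ rainbow-connects $G[D\cup S]$ using $rc(G[D]) + 1 \leq |D|/\kappa + 1 \leq |D\cup S|/\kappa$ colours, contradicting maximality. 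Hence $D$ is a connected $1$-step dominating set with $rc(G[D]) \leq |D|/\kappa \leq n/\kappa$ --- and here $D$ is allowed to be as large as $V(G)$, which is harmless --- so Lemma \ref{lem:domrain} with $l=1$ gives $rc(G) \leq n/\kappa + 3$. The moral, visible in the counterexample above, is that chordality buys a cheap colouring extension (one new colour per at least $\kappa$ new vertices), not a small dominating set; your proposal conflates these two quantities.
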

\begin{proof}
The case of $\kappa = 1$ is trivial since rainbow colouring a spanning tree of $G$ suffices. Hence let us assume $\kappa \geq 2$ and hence $G$ is bridgeless. We claim that $G$ has a $1$-step connected dominating set $D$ which can be rainbow coloured using $\frac{|D|}{\kappa}$ colours. Then by Lemma \ref{lem:domrain}, $rc(G) -3 \leq rc(D) \leq \frac{|D|}{\kappa} \leq \frac{n}{\kappa}$. Hence it remains to prove the  above claim. Consider a maximal connected set $D \subset V(G)$ that can be rainbow coloured using $\frac{|D|}{\kappa}$ colours. Such a set exists since any singleton set of vertices can be rainbow coloured using $0 < \frac{1}{\kappa}$ colours. Suppose for contradiction that $D$ is not a $1$-step dominating set. Then $N_G(D)$ is a vertex separator and hence contains a minimal separator $S \subset N_G(D)$. Since $G$ is $\kappa$-connected, $|S| \geq \kappa$, and since $G$ is chordal, $S$ induces a clique \ref{}[CHORDAL PROPERTY?]. Giving a single new colour to every $D\-S$ and $S\-S$ edge extends the rainbow colouring of $G[D]$ to $G[D \cup S]$. Thus $D \cup S$ is a connected set containing $D$ which can be coloured using $rc(G[D]) + 1 \leq \frac{|D|}{\kappa} + 1 \leq \frac{|D \cup S|}{\kappa}$ colours contradicting the maximality of $D$. So $D$ is a $1$-step connected dominating set and thus the result follows.
\end{proof}

\subsection{Rainbow colouring of $2$-vertex-connected Graphs}

The last result to be proved in this article is that $rc(G) \leq \halfc{n} + 1$ for every two-connected graph $G$. In order to prove it, we introduce some terms and notations and then state and prove a few lemmas.

Let $H$ be a connected graph and $P$ be an (open) ear on $H$, i.e., $P$ is a path $(x_0, x_1, \ldots, x_m)$ such that $V(P) \cap V(H) = \{x_0, x_m\}$ and $x_0 \neq x_m$. The number of edges in $P$ is called its {\em length}. An ear is termed {\em even} or {\em odd} based on its length. The vertices $x_0$ and $x_m$ are called the {\em foots} of $P$ and the remaining vertices of $P$ are called {\em internal vertices}. If $P$ is an even ear, then $x_{m/2}$ is called its {\em tip} and will be denoted by $tip(P)$. An internal vertex of an ear which is not a tip is called a {\em balanced vertex}. Note that the number of new vertices introduced by an ear is one less that its length. A path $P$ in a $2$-connected graph $G$ is called a {\em removable ear} of $G$ if $G$ remains $2$-connected after the removal of internal vertices of $P$. Further , $P$ is called a {\em clean-removable ear} if $P$ is removable and every internal vertex of $P$ has degree exactly $2$.

Now we introduce two colouring schemes to extend a rainbow colouring of $H$ to $H \cup P$. Given an edge coloured graph $G$, we use $colours(G)$ to denote the set of colours used in the colouring of $G$. 

\begin{defn}[Balanced colouring of an odd ear]
Let $P = (x_0, x_1, \ldots, x_{2k+1})$ be an odd ear on a rainbow coloured graph $H$. We define a colouring $C$ of the edges of $P$ as follows. $C(x_j,x_{j+1}) = C(x_{j+k+1},x_{j+k+2}) = c_j$ for $0 \leq j \leq k-1$ and $C(x_k, x_{k+1} ) = c_{old}$, where $c_j \notin colours(H)$ and $c_{old} \in colours(H)$. The exact colour to be used as $c_{old}$ may be specified if necessary when this procedure is invoked. Then $C$ is called a {\em balanced colouring of $P$}, and every edge of $P$ which gets one of the colours in $\{c_0, c_1, \ldots, c_{k-1} \}$ is called a {\em balanced edge}.
\end{defn}

\begin{lem}
\label{lem:oddear}
Let $H$ be a rainbow coloured graph which is coloured using at most $h$ colours, and $P$ an odd ear on $H$ which is assigned a balanced colouring. Then $G := H \cup P$ is rainbow coloured. Further, $rc(H \cup P) \leq h + \frac{m}{2}$ where $m = |V(H \cup P)| - |V(H)|$ is the number of new vertices added by $P$.
\end{lem}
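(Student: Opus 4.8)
The plan is to first pin down the colour count and then prove rainbow connectivity by a short case analysis on the two endpoints, whose only substantial case is a pair of internal vertices lying on opposite sides of the central edge of $P$.

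\emph{Colour bound.} Writing the ear as $P = (x_0, \ldots, x_{2k+1})$, the balanced colouring introduces exactly the $k$ fresh colours $c_0, \ldots, c_{k-1}$, while the central edge $(x_k, x_{k+1})$ reuses a colour $c_{old} \in colours(H)$. Since $P$ adds $m = 2k$ new vertices, the colouring of $G = H \cup P$ uses at most $h + k = h + m/2$ colours; hence the inequality $rc(H \cup P) \le h + m/2$ follows immediately once the colouring is shown to be rainbow.

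\emph{Rainbow connectivity.} I fix a pair $\{a,b\}$ of vertices of $G$. If both lie in $H$, a rainbow $H$-path serves, since $c_0, \ldots, c_{k-1}$ do not occur in $H$. The guiding observation for the remaining cases is that the two halves of $P$ are each rainbow in the new colours: reading from $x_0$, the edges up to $x_k$ carry the distinct colours $c_0, \ldots, c_{k-1}$, and, reading from $x_{2k+1}$, the edges down to $x_{k+1}$ again carry distinct new colours. Consequently, if exactly one of $a,b$ --- say $b = x_i$ --- is internal, I route from the nearer foot: for $i \le k$ the subpath $x_0, \ldots, x_i$ uses the distinct colours $c_0, \ldots, c_{i-1}$, and for $i \ge k+1$ the subpath $x_{2k+1}, \ldots, x_i$ uses distinct new colours; prepending a rainbow $H$-path from $a$ to that foot (which uses only old colours) yields a rainbow $a$-$b$ path.

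The crux is a pair of internal vertices $a = x_i$, $b = x_j$ with $i < j$. If both lie in the same half, the subpath of $P$ between them already uses distinct new colours. The remaining situation, $1 \le i \le k$ and $k+1 \le j \le 2k$, is handled by pitting the direct path against a detour through $H$. The direct path $x_i, \ldots, x_j$ uses the new colours $\{c_i, \ldots, c_{k-1}\} \cup \{c_0, \ldots, c_{j-k-2}\}$ together with the old colour $c_{old}$ on the central edge, and these new colours are distinct precisely when $j \le i + k + 1$. The detour consisting of $x_i, \ldots, x_0$, then a rainbow $H$-path from $x_0$ to $x_{2k+1}$, then $x_{2k+1}, \ldots, x_j$, uses the new colours $\{c_0, \ldots, c_{i-1}\} \cup \{c_{j-k-1}, \ldots, c_{k-1}\}$ together with old colours on the $H$-portion, and these new colours are distinct precisely when $j \ge i + k + 1$. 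Since the two ranges $j \le i+k+1$ and $j \ge i+k+1$ exhaust all admissible $j$, at least one of the two paths is rainbow. This complementarity is exactly what the mirrored balanced colouring is built to guarantee; establishing it is the one non-routine step, after which the colour bound stated above is immediate.
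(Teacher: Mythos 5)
Your proof is correct and follows essentially the same route as the paper's: rainbow paths within $H$, paths through the nearer foot when one endpoint is internal, and for two internal vertices the same dichotomy between the direct subpath of $P$ (when $j - i \leq k+1$) and the detour through a rainbow $H$-path between the feet (when $j - i \geq k+1$). The only difference is that you make the colour-set computations and the complementarity of the two conditions explicit, which the paper leaves implicit.
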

\begin{proof}
We prove that $G$ is rainbow coloured by demonstrating a rainbow path between every pair of vertices in $G$. Let $P = (x_0, x_1, \ldots, x_{2k+1})$. Since $H$ is rainbow coloured, every pair vertices in $H$ is already connected by a rainbow path in $H$. If one vertex $v$ in the pair is in $H$ and the other is an internal vertex $x_i$ of $P$, then $x_i \-f$ path in $P$ joined with the $f\-v$ rainbow path in $H$, where $f$ is the foot of $P$ nearer to $x_i$, is an $x_i\-v$ rainbow path. If both the vertices in the pair, $x_i$ and $x_j$, $i < j$ are in $P$, then if $j-i \leq k+1$, the portion of $P$ between these two vertices is a rainbow path. Otherwise, we get a rainbow path between them by joining the remaining portion of $P$ with a rainbow path in $H$ between the two foots of $P$. Note that the number of new vertices added by $P$ is $m = 2k$. It is evident from the balanced colouring procedure that $rc(H \cup P)$ is at most $h + k$.
\end{proof}

\begin{defn}[Balanced colouring of an even ear]
Let $P = (x_0, x_1, \ldots, x_{2k})$ be an even ear on a rainbow coloured graph $H$. We define a colouring $C$ of the edges of $P$ as follows. $C(x_j,x_{j+1}) = C(x_{j+k+1},x_{j+k+2}) = c_j$ for $0 \leq j \leq k-2$, $C(x_{k-1}, x_{k}) = t_1$ and $C(x_k, x_{k+1} ) = t_2$, where $c_j \notin colours(H)$, and $t_1, t_2 \notin \{c_0, c_1, \ldots, c_{k-2}\}$ will be two distinct colours. The exact colours to be used as $t_1$ and $t_2$ will be specified when this colouring procedure is invoked. Such a colouring is called a {\em balanced colouring} of $P$ and every edge of $P$ which gets one of the colours in $\{c_0, c_1, \ldots, c_{k-2} \}$ is called a {\em balanced edge}. Edges $(x_{k-1}, x_k)$ and $(x_k, x_{k+1})$ are called the first and second {\em tip edges} respectively, and $t_1$ and $t_2$ are called the {\em tip colours} of $P$.
\end{defn}

\begin{lem}
\label{lem:evenear}
Let $H$ be a rainbow coloured graph, and $P$ an even ear on $H$ which is assigned a balanced colouring. Let $G := H \cup P$. Then there exists a rainbow path between every pair of vertices in $G$ except in the case when one vertex in the pair is $tip(P)$ and the other is in $H$. Further, the number of new colours introduced by the balanced colouring of $P$ is $\frac{m-1}{2} + t$, where $m$ is the number of new vertices added by $P$ and $t = |\{t_1, t_2\} \setminus colours(H)|$ denotes the number of fresh colours among the tip colours.
\end{lem}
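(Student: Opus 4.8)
The plan is to mirror the proof of Lemma~\ref{lem:oddear}, exhibiting a rainbow path between every pair of vertices of $G = H \cup P$ by a case analysis, with the single genuine obstruction being that the tip colours $t_1, t_2$ may already occur in $H$. Write $P = (x_0, x_1, \ldots, x_{2k})$, so that the foots $x_0, x_{2k}$ lie in $H$, the internal vertices are $x_1, \ldots, x_{2k-1}$, and $m = 2k-1$, whence $\frac{m-1}{2} = k-1$. The colour count is then immediate: the balanced colouring uses the fresh colours $c_0, \ldots, c_{k-2}$ (that is $k-1 = \frac{m-1}{2}$ of them) together with the tip colours, of which exactly $t = |\{t_1,t_2\}\setminus colours(H)|$ are new, so the number of new colours is $\frac{m-1}{2} + t$. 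Two structural observations drive the path analysis. First, since the only colour repetitions along $P$ are the pairs $c_j$ appearing on edges $(x_j,x_{j+1})$ and $(x_{j+k+1},x_{j+k+2})$, the subpath of $P$ joining $x_i$ and $x_j$ ($i<j$) is itself a rainbow path precisely when $j-i \le k+1$. Second, every non-tip internal vertex $x_i$ can reach a foot along $P$ using only the fresh colours $c_0,\ldots,c_{k-2}$: towards $x_0$ if $i \le k-1$ and towards $x_{2k}$ if $i \ge k+1$; in particular such a half never traverses a tip edge.

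With these in hand, most cases are routine. If both vertices lie in $H$, a rainbow path already exists in $H$. If $v \in H$ and $x_i$ is a non-tip internal vertex, I would route $x_i$ to its safe foot (fresh colours only) and append a rainbow $H$-path from that foot to $v$; since the fresh colours are disjoint from $colours(H)$, the concatenation is rainbow. If both vertices lie on $P$, say $x_i, x_j$ with $i<j$ and $j-i \le k+1$, the direct subpath is rainbow by the first observation. The remaining subcase is $j - i \ge k+2$.

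The main point to verify is this last subcase. A counting argument shows that neither endpoint can be the tip and that the two endpoints straddle it, with $i \le k-1$ and $j \ge k+1$: the left block $\{x_0,\ldots,x_{k-1}\}$ and the right block $\{x_{k+1},\ldots,x_{2k}\}$ each span index-gap at most $k-1 < k+2$, while any pair including the tip $x_k$ has gap at most $k$, so a gap of at least $k+2$ forces one endpoint into each half. I would then route $x_i\-x_0$ and $x_{2k}\-x_j$ along $P$ (fresh colours only) and join them by a rainbow $H$-path between the two foots. The two $P$-portions carry colours $c_0,\ldots,c_{i-1}$ and $c_{j-k-1},\ldots,c_{k-2}$ respectively, and these sets are disjoint exactly because $j-i > k$; together with disjointness from $colours(H)$, this makes the whole path rainbow. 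Tracing through all cases, the only pairs left uncovered are those in which one vertex is $tip(P)=x_k$ and the other lies in $H$ (the foots included): every route from $x_k$ to a foot must use a tip edge, whose colour may clash with the appended $H$-path, and this is precisely the advertised exception. The one delicate step is confirming this dichotomy---that far-apart internal pairs always straddle the tip and hence avoid tip edges on both sides, while the tip is the unique vertex admitting no tip-edge-free route to a foot.
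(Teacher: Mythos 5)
Your proposal is correct and follows essentially the same route as the paper's proof: the identical case analysis (pairs in $H$, a vertex of $H$ paired with a balanced vertex via its nearer foot, close pairs on $P$ via the direct subpath, far pairs on $P$ routed through a rainbow $H$-path between the feet), with the tip--$H$ pairs as the sole exception. Your write-up merely makes explicit the details the paper leaves implicit, namely that a gap of at least $k+2$ forces the pair to straddle the tip and that the two fresh-coloured end segments of $P$ use disjoint colour sets.
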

\begin{proof}
Let $P = (x_0, x_1, \ldots, x_{2k})$. Since $H$ is rainbow coloured, every pair vertices in $H$ is already connected by a rainbow path in $H$. If one vertex $v$ in the pair is in $H$ and the other is a balanced vertex $x_i$ of $P$, then $x_i \-f$ path in $P$ joined with the $f\-v$ rainbow path in $H$, where $f$ is the foot of $P$ nearer to $x_i$, is an $x_i\-v$ rainbow path. If both the vertices in the pair, $x_i$ and $x_j$, $i < j$ are in $P$, then if $j-i \leq k+1$, the portion of $P$ between these two vertices is a rainbow path. Otherwise, we get a rainbow path between them by joining the remaining portion of $P$ with a rainbow path in $H$ between the two foots of $P$. Note that the number of new vertices added by $P$ is $m = 2k-1$. It is evident from the balanced colouring procedure that the number of new colours used is at most $k-1 + t$.
\end{proof}

\begin{thm}
\label{thm:2con}
For every $2$-vertex-connected graph $G$ of order $n$,
$$ rc(G) \leq \halfc{n} + 1.$$
\end{thm}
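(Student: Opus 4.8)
The plan is to realise $G$ through an open ear decomposition and to grow the rainbow colouring one ear at a time, using the two balanced colouring schemes already at hand. I would begin with a cycle $C$ of the decomposition, rainbow colour it optimally with $\halfc{|C|}$ colours (opposite edges sharing a colour), and then add the ears $P_1,\dots,P_r$ in order, maintaining at each stage a rainbow coloured subgraph $H$. Length‑one ears (chords) add no vertices and can be given any existing colour, since extra edges never destroy rainbow connectivity, so they are free. For an \emph{odd} ear adding $m$ new vertices, Lemma~\ref{lem:oddear} says a balanced colouring costs exactly $m/2$ new colours and keeps $H\cup P$ fully rainbow connected; odd ears thus pay exactly their ``fair share'' of half a colour per new vertex, and cause no trouble. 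The entire difficulty is concentrated in the even ears.

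For an \emph{even} ear $P$ adding $m$ (odd) new vertices, Lemma~\ref{lem:evenear} connects every pair except $tip(P)$ to $H$ and costs $\tfrac{m-1}{2}+t$ colours, where $t\in\{0,1,2\}$ counts the fresh tip colours. The first thing I would establish is the budget. The cycle costs at most $\tfrac{|C|}{2}+\tfrac12$, each odd ear costs half its new vertices, and each even ear coloured with $t=0$ (both tip edges reusing old colours) costs $\tfrac{m-1}{2}$, i.e.\ \emph{less} than its fair share $m/2$. Summing, if every even ear is coloured with $t=0$ the total is at most $\halfc{n}$, with a single colour of slack to spare; this slack, together with the cycle's parity rounding, is the source of the additive $+1$. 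Hence the colour count is never the bottleneck: the bottleneck is that $t=0$ leaves $tip(P)$ disconnected from $H$.

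The main step, and the main obstacle, is therefore to connect every even‑ear tip while spending at most one extra colour in total. Here I would insist that each ear of the decomposition be \emph{clean‑removable}, so that every internal vertex, in particular every tip, has degree exactly $2$; the crucial consequence is that any path through a tip must traverse both of its incident edges. I would then reserve one special colour $s$ (this is the ``$+1$''), give one tip edge of each even ear the colour $s$ and the other a reused old colour $\alpha\neq s$, and connect $tip(P)$ to a vertex $v\in H$ by running along $P$ from the tip to the foot incident to the $s$‑edge (a subpath whose colours are $s$ together with the fresh balanced colours of $P$) and then appending a rainbow $f\!-\!v$ path in $H$ that \emph{avoids} $s$. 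Since the balanced colours are fresh to $P$ and $s$ is avoided on the $H$‑side, such a concatenation is rainbow. The leverage is that $s$ lives only on tip edges and tips have degree $2$, so a path avoids $s$ precisely by avoiding tips; I would maintain, as a strengthened invariant along the decomposition, that any two non‑tip vertices of $H$ are joined by a rainbow path using none of the tip edges.

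I expect the delicate part to be exactly this invariant, for two reasons. First, rainbow connectivity is not transitive, so the induction that the tip‑deleted colouring stays rainbow connected (routing each balanced vertex to a foot through its own fresh balanced colours and then across the core) must be carried out carefully to rule out colour clashes in the concatenation. Second, the genuinely awkward case is a tip‑to‑tip pair from two different even ears, where a naive route would use $s$ twice: I would resolve it by reaching one tip through its $s$‑edge and the other through its reused‑colour edge (both routes exist because each degree‑$2$ tip is accessible from either foot), choosing the reused old colours and the connecting route so that no colour repeats. Verifying these choices, together with the boundary cases of length‑$2$ ears and an odd initial cycle—which is precisely where the single extra unit of the bound $rc(G)\le\halfc{n}+1$ is consumed—is where the real work lies.
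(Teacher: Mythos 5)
Your toolkit is the paper's (balanced colourings, Lemmas \ref{lem:oddear} and \ref{lem:evenear}, the correct diagnosis that odd ears pay their own way and the whole difficulty is connecting tips of even ears within budget), but your global strategy --- fix an open ear decomposition in advance, process it inductively, and reserve one global tip colour $s$ --- has genuine gaps, the first of which is fatal as stated. You \emph{insist} that every ear of the decomposition be clean-removable, so that every tip has degree $2$. But cleanness is a property of $G$, not of the order in which ears are listed: in any ear decomposition, later ears may be forced to attach their feet to internal vertices of earlier ears. For a $2$-connected cubic graph every vertex has degree $3$, so your requirement amounts to demanding a decomposition with no even ears of length at least $2$ at all --- a nontrivial structural claim that you neither state nor prove, and which cannot simply be arranged by reordering. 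The paper never assumes cleanness; it \emph{derives} it. It takes $H$ to be a maximal $2$-connected subgraph rainbow colourable with $\halfc{|H|}$ colours and proves (Claims \ref{claim1} and \ref{claim2}) that what remains of $G$ is at most three clean-removable even ears attached directly to $H$; these are clean precisely because they are the outermost layer, with nothing attached to them.

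Two further steps fail even granting cleanness. First, your reserved-colour scheme ($s$ on one tip edge, a reused old colour $\alpha'$ on the other) breaks exactly on the tip-to-tip pairs you flag: routing one tip out through its $s$-edge and the other through its $\alpha'$-edge needs a rainbow path in $H$ between the two feet avoiding both $s$ \emph{and} the old colour $\alpha'$, and rainbow connectivity of $H$ gives no control over which old colours appear on that path; ``choosing the reused old colours and the connecting route so that no colour repeats'' is precisely the statement requiring proof. The paper sidesteps this by giving all the remaining even ears the \emph{same two fresh} tip colours $t_1, t_2$, so every connecting path need only avoid colours that do not occur inside $H$ at all, and no clash is possible. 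Second, the invariant you propose to maintain --- every pair of non-tip vertices of $H$ joined by a rainbow path using no tip edge --- is already false after adding a single even ear of length at least $4$: the two neighbours of the tip, $x_{k-1}$ and $x_{k+1}$, are joined \emph{only} through the two tip edges, since the route around through $H$ repeats the balanced colours $c_0, \ldots, c_{k-2}$ on the two halves (that repetition is the whole point of balanced colouring). So the induction cannot start in the form you describe; repairing these three defects pushes you essentially to the paper's reformulation --- a maximal cheaply colourable $2$-connected subgraph plus a bounded residue of clean even ears --- rather than an induction over a fixed decomposition.
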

\begin{proof}
Let $G$ be a minimal counter example to the above statement, i.e., $G$ is a $2$-connected graph with $rc(G) > \halfc{n} +1$ and every proper $2$-connected subgraph $H$ of $G$ has $rc(H) \leq \halfc{|H|} + 1$. First we argue that $G$ cannot have a removable odd ear. Let $P$ be a removable odd ear of $G$ and $G'$ be the subgraph obtained by removing the internal vertices of $P$ from $G$. Since $G'$ is a proper $2$-connected subgraph of $G$, it can be coloured using at most $\halfc{|G'|} + 1$ colours. This colouring can be extended to $G$ by giving a balanced colouring to $P$ (Lemma \ref{lem:oddear}). Hence $rc(G) \leq \halfc{n} + 1$ and therefore $G$ cannot be a counter example to the statement. 

Let $H$ be a maximal $2$-connected subgraph of $G$ that can be rainbow coloured using $\halfc{|H|}$ colours. Such an $H$ exists since any cycle in $G$ of $m$ vertices can be rainbow coloured using $\halfc{m}$ colours.  

If $V(H) = V(G)$, then $G$ is not a counter example. Hence there exists a vertex $v$ in $G$ outside $H$ and by $2$-connectivity of $G$, there exists two internally vertex disjoint paths from $v$ to $H$ whose union forms an ear on $H$. Let $P_1$ be one of the largest such ears. 

\begin{claim}
\label{claim1}
$P_1$ is a clean-removable even ear in $G$.
\end{claim}

If $P_1$ is an odd ear, we can extend the rainbow colouring of $H$ by a balanced colouring of $P_1$ so that the $2$-connected graph $H' := H \cup P_1$ is rainbow coloured using at most $\halfc{|H'|}$ colours (Lemma \ref{lem:oddear}). This contradicts the maximality of $H$. If $P_1$ is not a clean-removable ear, then we will show that there exists a $2$-connected proper super-graph $H'$ of $H$ which can be rainbow coloured using at most $\halfc{|H'|}$ colours thereby contradicting the maximality of $H$.

Given $H$ and the even ear $P_1$ which is not clean-removable, we construct $H'$ as follows. Let $H' := H \cup P_1 \cup P_2 \cup \cdots \cup P_{l+1}$, where $P_{l+1}$ is the first even ear in the sequence after $P_1$ and every $P_{i+1}$ is the largest ear in $G$ that can be added on $H_i := H \cup P_1 \cup P_2 \cup \cdots \cup P_i$ which has at least one foot as an internal vertex of $P_i$. We will always get an even ear $P_{l+1}$ because otherwise this construction ends at a clean-removable odd ear in $G$ which contradicts the minimality of $G$. 

We extend the rainbow colouring of $H$ to $H'$ as follows. Give a balanced colouring to the even ear $P_1$ using one new colour $t_{new}$ as the first tip edge colour $t_1$ and any colour $t_{old} \in colours(H)$ as the second tip edge colour $t_2$. We can find a rainbow path from $tip(P_1)$ to every vertex in $H$ through the first tip edge coloured $t_{new}$. Hence by Lemma \ref{lem:evenear}, we can see that $H_1$ is rainbow coloured and $rc(H_1) \leq \halfc{|H_1|} + 1$. If $l \geq 2$, we give a balanced colouring to every odd ear $P_2$ to $P_l$ ensuring that $t_{new}$ is not used as the old colour ($c_{old}$) for the middle edge of any of them. Therefore $H_l$ is rainbow coloured (Lemma \ref{lem:oddear}) and the colouring uses at most $\halfc{|H_l|} + 1$ colours. Let $f_1$ and $f_2$ be the foots of $P_{l+1}$, one of which is an internal vertex of $P_l$. Since we always add the largest possible ear incident on the previous ear in each step, $f_1$ and $f_2$ are non-adjacent in $H_l$. Let $R$ be a rainbow path in $H_l$ between $f_1$ and $f_2$. Now. $R \cap P_l$ may or may not include a balanced edge of $P_l$ based on which we branch into two cases.
  
In the former case, when there is a balanced edge of $P_l$ in $R \cap P_l$, let $c$ be the colour assigned to that edge in the balanced colouring of $P_l$. There are only two edges in $H_l$ with colour $c$ since $c$ is a new colour used in the balanced colouring of $P_l$ which is the last ear added in the construction of $H_l$. The two edges of $P_l$ coloured $c$ form an edge-cut of $H_l$ in such a way that one component is a subgraph of $P_l$ and the other contains $H_{l-1}$. Since there is no path between two vertices of the same component with exactly one $c$-coloured edge, we see that $f_1$ and $f_2$ are not in the same component. Let $C_1$ be the component containing $f_1$ and $C_2$ be the one containing $f_2$.   It is easy to see that both $C_1$ and $C_2$ are rainbow coloured without using colour $c$. Without loss of generality, let $tip(P_1) \in C_1$. This further ensures that $C_2$ does not use the colour $t_{new}$. Now we can give a balanced colouring of the even ear $P_{l+1} = (f_1 = x_0, x_1, \ldots, x_{2k} = f_2)$ with $t_1 = c$ and $t_2 = t_{new}$ as the tip colours. Recall that it is the edge $e_1 = (x_{k-1}, x_k)$ which gets the colour $t_1$. Since $C_1$ is rainbow coloured without using colour $c$, there is a rainbow path from $tip(P_{l+1})$ through $e_1$ to every vertex in $C_1$. Similarly, there is a rainbow path from $tip(P_{l+1})$ to every vertex in $C_2$ through $e_2 = (x_k, x_{k+1})$ which gets colour $t_{new}$. Hence by Lemma \ref{lem:evenear}, $H' = H_{l+1}$ is rainbow coloured. We have used only $k-1$ new colours to colour $P_{l+1}$ which has $2k-1$ new vertices and hence $rc(H') \leq \halfc{|H'|}$. 

The latter case i.e., the case where there is no balanced edge of $P_l$ in $R \cap P_l$, occurs only when $P_l$ is an even ear and the two foots of  $P_{l+1}$ are vertices that are adjacent to $tip(P_l)$ in $P_l$. Hence, first of all, $l=1$ as all but the first and last ears in the sequence are odd ears. Further since $P_1$ is the largest possible ear on $H$, and since both the foots of $P_2$ are only a distance $2$ apart in $P_1$, ear $P_2$ is of length $2$. Colour the two edges of $P_2$ so that the $4$-cycle formed by $P_2$ and $tip(P_1)$ is coloured alternatively with $t_{new}$ and $t_{old}$ where $t_{new}$ and $t_{old}$ are the two tip colours used during the balanced colouring of $P_1$. Every vertex on $H$ and vertices on one side of $P_1$ including $tip(P_1)$  can be reached from $tip(P_2)$ using rainbow paths through the edge of $P_2$ which is coloured $t_{new}$. Remaining vertices of $P_1$ can be reached from $tip(P_2)$ using rainbow paths through the other edge of $P_2$ which is coloured $t_{old}$. Hence by Lemma \ref{lem:evenear}, $H' = H \cup P_1 \cup P_2$ is rainbow coloured and we have used only $\halfc{|H'|}$ colours in total for the same.

This completes a proof of Claim \ref{claim1}.

\begin{claim}
\label{claim2}
$G = H \cup Q_1 \cup Q_2 \cup \cdots \cup Q_k$, where every $Q_i$ is a clean-removable even ear on $H$ and $k \leq 3$
\end{claim}

Let $Q_2 = P_1$ and $Q_2$ be one of the largest ears on $H$ in $G \setminus Q_1$, if it exists. Similar arguments as in the proof of Claim \ref{claim1} shows that, $Q_2$ is also a clean-removable even ear. Extend this construction so that $Q_{i+1}$ is a largest possible ear on $H$ in $G \setminus (Q_1 \cup Q_2 \cup \cdots \cup Q_i)$, if it exists. Hence $G = H \cup Q_1 \cup Q_2 \cup \cdots \cup Q_k$, where every $Q_i$ is a clean-removable even ear on $H$. 

If $k \geq 4$, then we show that $H' = H \cup Q_1 \cup Q_2 \cup Q_3 \cup Q_4$ can be rainbow coloured using at most $\halfc{|H'|}$ colours thereby contradicting the maximality of $H$. Give separate balanced colouring to all the four even ears $Q_1, \ldots, Q_4$, but with the same set of tip colours $t_1$ and $t_2$ which are two new colours not used in the colouring of $H$ or in the balanced colouring of any of the above four ears. Observe that from the tip of every such ear added, there are two rainbow paths to every vertex in $H$, one which avoids colour $t_1$ and the other which avoids colour $t_2$. Now we can see, by analysing different kinds of pairs and using Lemma \ref{lem:evenear}, that every pair of vertics in $H'$ is connected by a rainbow path. Further, we have used only $m/2$ extra colours, where $m$ is the total number of new vertices added to $H$ by the four ears. Hence $rc(H') \leq \halfc{|H'|}$ and so $k \leq 3$.


This proves Claim \ref{claim2}.

Hence $G = H \cup Q_1 \cup Q_2 \cup \cdots \cup Q_k$, where every $Q_i$ is a clean-removable even ear on $H$ with $k \in \{1, 2, 3\}$ and $rc(H) \leq \halfc{|H|}$.  In the case when $k=1$, we extend a minimum rainbow colouring of $H$ by a giving a balanced colouring to the even ear $Q_1$ with a new colour $t_{new} \notin colours(H)$ as first tip colour $t_1$ and an old colour $t_{old} \in colours(H)$ as the second tip colour $t_2$. In cases $k=2$ and $k=3$, we extend a minimum rainbow colouring of $H$ to $G$ as done in the proof of Claim \ref{claim2} above. That is, every $Q_i$ is given a separate balanced colouring but with the same set of tip colours not used in the colouring of $H$ or in the balanced colouring of any of $Q_i$. Now, using Lemma \ref{lem:evenear}, it is easy to verify that $G$ becomes rainbow coloured and the total number of colours used is at most $\halfc{|G|} + 1$. Hence $G$ cannot be a counter example to the original statement. This completes the proof of Theorem \ref{thm:2con}.

\end{proof}

\bibliographystyle{plain}
\bibliography{rainbow}
\end{document}